\theoremstyle{definition}
\begin{document}
\title{Fake degrees of classical Weyl groups}
\author{William M. McGovern}
\subjclass{22E47,22E46}
\keywords{fake degrees, hyperoctahedral groups, major index, domino tableau}
\begin{abstract}
We compute the fake degrees of representations of classical Weyl groups in terms of major indices of domino tableaux.
\end{abstract}
\maketitle

\section{Introduction}
Let $W$ be the complex reflection group $C_d\wr S_n$, where $C_d$ is the cyclic group of order $d$.  The action of $W$ on $\mathbb C^n$ by coordinate permutations and scalar multiplications by complex $d$th roots of unity then extends to the coordinate ring $S$ of $\mathbb C^n$, preserving the natural grading of $S$.  Let $I$ be the ideal of $S$ generated by $W$-invariant polynomials of positive degree.  The coinvariant algebra $C=S/I$ is then well known to be isomorphic to the regular representation of $W$; like $S$ it has a graded structure preserved by $W$.  Given an irreducible representation $\tau$ of $W$ of degree $d_\tau$ its so-called fake degree (polynomial) is the palindromic polynomial $f_\tau(q)=\sum_{i=1}^{d_\tau} q^{d_i}$, where the exponents $d_i$ are the degrees in which $\tau$ occurs in $C$, each listed according to its multiplicity.  There are well-known formulas for these degrees as powers of $q$ times ratios of products of differences $q^m-1$ for various $m$ (see \cite{Stei51,L77}).  More recently these formulas have been rewritten in terms of major indices of standard Young tableaux \cite{Sta71,Ste89}.  Here we give new formulas for these degrees for hyperoctahedral groups and Weyl groups of type $D$, using major indices of domino tableaux.  Such tableaux were first introduced in \cite{G90} to study primitive ideals in enveloping algebras of classical complex Lie algebras (see also \cite{G92,G93}).  They were used to study orbital subvarieties of nilpotent orbits in classical complex Lie algebras \cite{M21,M21'}.  We remark also that the notion of the major index of a domino tableau has been generalized to that of a descent of a border strip tableau in \cite{P21}.

\section{Types $B$ and $C$}

\noindent We begin with a quick review of the $q$-analogues of integers, factorials, and multinomial coefficients.  For $n$ a nonnegative integer, $k$ a positive integer at most equal to $n$, and $\alpha=(\alpha_1,\ldots,\alpha_m)$ a partition of $n$, set 
\vskip .1in
\[
[n]_q=1+q+\cdots+q^{n-1} = \frac {q^n-1} {q-1} \text{ for }n\ge1, [0]_q=1
\]
\[
[n]_q! = [n]_q[n-1]_q\cdots [1]_q,{n \choose k}_q=\frac {[n]_q!} {[k]_q![n-k]_q!}
\]
\[
{n\choose\alpha}_q=\frac {[n]_q!} {[\alpha_1]_q!\cdots[\alpha_m]_q!}
\]
\vskip .1in
\noindent Identifying $\alpha$ with the Young diagram of the corresponding shape, so that $\alpha_i$ is the length of the $i$th row of this diagram, denote by $h_c$ the length of the hook of the cell $c\in\alpha$.   Set $b(\alpha)=\sum_{i=1}^m (i-1)\alpha_i$.  

Recall that a {\sl standard Young tableau} $T$ of shape $\alpha$ is a bijective filling of the cells of $\alpha$ by the numbers from 1 to the sum $|\alpha|$ of the parts of $\alpha$ such that labels increase to the right in rows and down columns.  The {\sl major index} maj$(T)$ of $T$, sometimes just called the index of $T$, is the sum of the labels $i$ such that $i+1$ appears in a lower row than $i$ in $T$.  Denoting by SYT$(\alpha)$ the set of standard Young tableaux of shape $\alpha$, we have the generating function
\[
\text{SYT}(\alpha)^{\text {maj}}(q):=\sum_{T\in\text{SYT}(\alpha)}q^{\text{maj}(T)}
\]

It is well known that irreducible representations of $W$ are parametrized by ordered $d$-tuples $\lambda=(\lambda^{(1)}\ldots,\lambda^{(d)})$ of partitions $\lambda^{(i)}$ such that $\sum_i|\lambda^{(i)}|=n$ \cite[Thm. 4.1]{Ste89}.  Denote by $V_\lambda$ the representation corresponding to $\lambda$ and write $b(\lambda)=\sum_{i=1}^d (i-1)|\lambda^{(i)}|$.  A standard (Young) tableau $T$ of shape $\lambda$ is a $d$-tuple $(T^{(1)},\ldots,T^{(d)})$ of fillings of shapes $\lambda^{(1)},\ldots,\lambda^{(d)}$ such that the labels $1,\ldots,n$ are each used exactly once overall and labels increase across rows and down columns of each $T^{(i)}$.  The major index maj$(T)$ of $T$ is the sum of the labels $i$ such that either $i$ appears in a higher row than $i+1$ in the same filling $T^{(j)}$, or $i,i+1$ appear in the fillings $T^{(j)},T^{(k)}$, respectively, with $j<k$.  Then Stanley and Stembridge have derived the following formula for the fake degree $f_\lambda$ corresponding to $\lambda$ \cite{Sta71,Sta79},\cite[Thm. 5.3]{Ste89}.  Denote by SYT$(\lambda)$ the generating function $\sum_T q^{\text{maj}(T)}$, where the sum runs over standard tableaux of shape $\lambda$.

\newtheorem{theorem}{Theorem}
\newtheorem{corollary}{Corollary}
\begin{theorem}
The fake degree $f_\lambda$ corresponding to $\lambda$ is given by 
\[
f_\lambda=q^{b(\lambda)}\text{SYT}(\lambda)(q^d)=q^{b(\lambda)}{n\choose {|\lambda^{(1)}|,\ldots,|\lambda^{(d)}|}}_q\cdot\prod_{i=1}^d\text{SYT}(\lambda^{(i)})^{\text{maj}}(q^d)
\]
\noindent where
\[
\text{SYT}(\alpha)^{\text{maj}}(q)=\frac {q^{b(\alpha)}[r]_q!} {\prod_{c\in\alpha} [h_c]_q}
\]
\noindent for a partition $\alpha=(\alpha_1,\alpha_2,\ldots)$ of $r$ and $f_\lambda$ denotes the fake degree of the representation $V_\lambda$ corresponding to $\lambda$.  Equivalently, the multiplicity of $V_\lambda$ in the $k$-th graded piece of the coinvariant algebra $C$ is the number of standard tableaux $T$ of shape $\lambda$ with $k=b(\lambda)+d\,\text{maj}(T)$.
\end{theorem}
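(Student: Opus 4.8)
The plan is to reduce the computation to the symmetric group via the semidirect structure of $W$, combine the classical $S_n$ fake-degree formula with Stanley's $q$-hook-length identity to obtain the closed product formula (this part is in essence Stembridge's Theorem 5.3 and could instead simply be quoted), and then supply the combinatorial argument that identifies that product with the major-index generating function of $d$-tuple standard tableaux, which is where the real content lies. Concretely, write $W=C_d^{\,n}\rtimes S_n$ and set $y_i=x_i^d$, so that $S^{C_d^n}=\mathbb C[y_1,\dots,y_n]$ with $y_i$ in degree $d$ and $S\cong\mathbb C[y_1,\dots,y_n]\otimes\bigotimes_{i=1}^n\mathbb C[x_i]/(x_i^d)$ as graded $W$-modules, with $C_d^n$ acting trivially on the first factor and coordinatewise on the second while $S_n$ permutes the $y_i$ and the tensor slots. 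Since $S^W=\mathbb C[e_1(y),\dots,e_n(y)]$ sits inside the first factor, passing to $C=S/I$ gives a graded $W$-module isomorphism
\[
C\;\cong\;C^{(d)}_{S_n}\otimes\bigotimes_{i=1}^n\mathbb C[x_i]/(x_i^d),
\]
where $C^{(d)}_{S_n}$ denotes the $S_n$-coinvariant algebra with all degrees multiplied by $d$ and with $C_d^n$ acting trivially.

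The next step is to isolate the $V_\lambda$-isotypic part. The second factor $M:=\bigotimes_i\mathbb C[x_i]/(x_i^d)$ is the graded form of $\mathrm{Ind}_{S_n}^W\mathbf 1$: its $C_d^n$-weight spaces are the lines spanned by the monomials $\prod_i x_i^{j_i}$ with $0\le j_i\le d-1$, those of a fixed content $c=(c_0,\dots,c_{d-1})$ (where $c_j=\#\{i:j_i=j\}$, $\sum_j c_j=n$) all lie in the single degree $\sum_j jc_j$, and together they span $\mathrm{Ind}_{C_d\wr(S_{c_0}\times\cdots\times S_{c_{d-1}})}^{W}\xi_c$ for the evident linear character $\xi_c$. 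Writing $n_j=|\lambda^{(j)}|$ and adopting the convention under which $V_\lambda$ carries the character $\chi^{\,j-1}$ of $C_d$ on each factor of its $j$-th block, only the content with $c_{j-1}=n_j$ pairs nontrivially with $V_\lambda$ against the $C_d^n$-trivial factor $C^{(d)}_{S_n}$, and it occurs in degree $\sum_j(j-1)n_j=b(\lambda)$. By the projection formula (equivalently, the diagonal term in Mackey's formula) the calculation then collapses to
\[
\langle V_\lambda,C\rangle_q\;=\;q^{b(\lambda)}\bigl\langle\,\mathrm{Ind}_{S_{n_1}\times\cdots\times S_{n_d}}^{S_n}\bigl(S^{\lambda^{(1)}}\boxtimes\cdots\boxtimes S^{\lambda^{(d)}}\bigr),\;C_{S_n}\bigr\rangle_{q^d},
\]
a purely $S_n$-theoretic graded multiplicity whose grading has been rescaled by $d$.

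For this $S_n$-computation I would invoke the standard fact that, for a Young subgroup $H=S_{n_1}\times\cdots\times S_{n_d}\le S_n$, restriction of the coinvariant algebra factors as a graded $H$-module, $\mathrm{Res}_H C_{S_n}\cong P\otimes C_H$, where $P=S^H\otimes_{S^{S_n}}\mathbb C$ is $H$-trivial with Poincar\'e polynomial $\binom{n}{n_1,\dots,n_d}_q$. Frobenius reciprocity then turns the bracket above into $\binom{n}{n_1,\dots,n_d}_{q^d}\prod_j\langle S^{\lambda^{(j)}},C_{S_{n_j}}\rangle_{q^d}$, and each factor $\langle S^{\lambda^{(j)}},C_{S_{n_j}}\rangle_q$ is the classical fake degree of $\lambda^{(j)}$, namely $\mathrm{SYT}(\lambda^{(j)})^{\mathrm{maj}}(q)=q^{b(\lambda^{(j)})}[n_j]_q!/\prod_{c\in\lambda^{(j)}}[h_c]_q$ by Stanley's $q$-hook-length identity. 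Assembling these pieces gives the product formula for $f_\lambda$, and hence also its compact form $q^{b(\lambda)}\mathrm{SYT}(\lambda)(q^d)$ once the last step is in place.

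It remains to recognize the product as the major-index generating function over $d$-tuple standard tableaux, i.e.\ to prove the identity
\[
\sum_{T}q^{\mathrm{maj}(T)}\;=\;\binom{n}{n_1,\dots,n_d}_q\prod_{j=1}^d\mathrm{SYT}(\lambda^{(j)})^{\mathrm{maj}}(q),
\]
the sum running over standard tableaux $T$ of shape $\lambda$; substituting $q\mapsto q^d$, multiplying by $q^{b(\lambda)}$, and comparing with the product formula then yields at once that the multiplicity of $V_\lambda$ in the $k$-th graded piece of $C$ equals $\#\{T:k=b(\lambda)+d\,\mathrm{maj}(T)\}$. To prove the identity I would decompose $T$ into the word $w\in\{1,\dots,d\}^n$ that records the block containing each label (a rearrangement of $1^{n_1}\cdots d^{n_d}$) together with the standardizations $S^{(1)},\dots,S^{(d)}$ of its blocks, and check that the descents of $T$ split into between-block steps, governed by $w$, and within-block steps, governed by the $S^{(j)}$, so that summing over the $\binom{n}{n_1,\dots,n_d}$ shuffles of the blocks restores the Gaussian multinomial. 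I expect this interleaving step to be the main obstacle: the subtlety is that $\mathrm{maj}(T)$ weights each descent by its absolute position in $\{1,\dots,n\}$, which does not split additively under the decomposition without some care, and the cleanest remedy is a $\mathrm{maj}$-preserving standardization carried out one block at a time, in the spirit of the classical proof that $\mathrm{maj}$ and $\mathrm{inv}$ are equidistributed on rearrangements of a multiset.
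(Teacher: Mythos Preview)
The paper does not prove this theorem at all: it is stated as a known result due to Stanley and Stembridge, with citations to \cite{Sta71,Sta79} and \cite[Thm.~5.3]{Ste89}, and no argument is supplied. Your proposal therefore goes well beyond the paper. What you outline is essentially Stembridge's own route: decompose $C\cong C_{S_n}^{(d)}\otimes\bigotimes_i\mathbb C[x_i]/(x_i^d)$, pick out the unique $C_d^{\,n}$-weight contributing to $V_\lambda$ (this produces the factor $q^{b(\lambda)}$), use Frobenius reciprocity and the freeness of $\mathbb C[y]^{H}$ over $\mathbb C[y]^{S_n}$ for a Young subgroup $H$ to obtain the $q$-multinomial, and finish with the $S_n$ fake degree and Stanley's $q$-hook-length formula. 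That part of your write-up is correct and complete; as you yourself note, one could instead simply cite \cite{Ste89}, which is exactly what the paper does.

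The only place where you flag a potential obstacle is the combinatorial identity
\[
\sum_{T\in\mathrm{SYT}(\lambda)}q^{\mathrm{maj}(T)}=\binom{n}{n_1,\dots,n_d}_{q}\prod_{j=1}^d\mathrm{SYT}(\lambda^{(j)})^{\mathrm{maj}}(q),
\]
and your proposed decomposition of $T$ into the block word $w$ and the standardized tableaux $S^{(j)}$ is the standard and correct approach. The ``subtlety'' you anticipate is handled exactly as you suggest: processing one block at a time, the problem reduces to the classical fact that on the set of shuffles of two fixed words the major index is equidistributed with the inversion statistic (MacMahon/Foata), which yields the Gaussian binomial and hence, iteratively, the Gaussian multinomial. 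There is no gap here. One small remark: your derivation (correctly) gives the multinomial at $q^d$, i.e.\ $\binom{n}{n_1,\dots,n_d}_{q^d}$, whereas the displayed formula in the paper writes it at $q$; this appears to be a typo in the paper rather than an error on your part.
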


\noindent We now specialize down to the case $d=2$.  Given an ordered pair $(\lambda^{(1)},\lambda^{(2)})$ of partitions with $|\lambda^{(1)}|+|\lambda^{(2)}|=n$, we follow Lusztig \cite[\S3]{L77} to produce a single partition $\rho_1$ of $2n$, as follows (see also \cite{C85}).  Add zeroes to the parts of $\lambda^{(1)},\lambda^{(2)}$ as necessary to make $\lambda^{(1)}=(\alpha_1,\ldots,\alpha_{m+1})$ have exactly one more part than $\lambda^{(2)}=(\beta_1,\ldots,\beta_m)$.  For $1\le i\le m+1$, put $\alpha_i^*=\alpha_i+m+1-i$; similarly for $1\le j\le m$ put $\beta_j^*=\beta_j+m-j$.  Then the $\alpha_i^*$ and the $\beta_j^*$ are distinct.  Now set $\gamma_i=2\alpha_i^*,\delta_i=2\beta_i^*+1$, and combine and rearrange the $\gamma_i,\delta_i$ to make a partition $\rho_1'=(p_1',\ldots,p_r')$.  Then for $1\le i\le r$ set $p_i=p_i'-r+i$, thereby obtaining $\rho_1=(p_1,\ldots,p_r)$.  In a similar way we also use the $\alpha_i^*$ and $\beta_i^*$ to produce a single partition $\rho_2$ of $2n+1$, by putting $\gamma_i'=2\alpha_i^*+1,\delta_i'=2\beta_i^*$ and combining and rearranging the $\gamma_i',\delta_i'$ to make $\rho_2'=(q_1',\ldots,q_r')$, finally setting $q_i=q_i'-r+i$ to obtain $\rho_2=(q_1,\ldots,q_r)$.  The partitions $\rho,\rho_2$ that arise in this way are exactly those supporting a standard domino tableau of that shape.

Let $\alpha$ be a partition of $2n$. Recall from \cite{G90} that a {\sl domino tableau $T$ of shape $\alpha$} is an arrangement with shape $\alpha$ of $n$ nonoverlapping dominos, each horizontal or vertical.  Such a tableau becomes {\sl standard} if each domino is labelled by an integer between 1 and $n$ such that labels increase across rows and down columns and that every integer between 1 and $n$ occurs exactly once as a label.  If instead $\alpha$ is a partition of $2n+1$, then a domino tableau of shape $\alpha$ is an an arrangement with shape $\alpha$ of $n$ dominos together with a single square in the upper left corner.  It becomes standard if the dominos are labelled $1,\ldots,n$ obeying the same rules and the square is labelled 0.  The major index maj$(T)$ of a standard domino tableau $T$ is defined to be the sum of the labels $i$ such that both squares of the domino labelled $i$ in $T$ lie strictly above both squares of the domino labelled $i+1$.  Denote by SDT$(\alpha)$ the set of standard domino tableaux of shape $\alpha$ and by SDT$(\alpha)^{\text{maj}}(q)$ the generating function $\sum_{T\in\text{SDT}(\alpha)} q^{\text{maj}(T)}$.

\begin{theorem}
Take $d=2$ and let the partition pair $\lambda=(\lambda^{(1)},\lambda^{(2)})$ correspond as above to the partitions $\rho_1,\rho_2$ of $2n,2n+1$, respectively.  Then we have
\[
f_\lambda=q^{b(\lambda)}\text{SDT}(\rho_1)^{\text{maj}}(q^2)=q^{b(\lambda)}\text{SDT}(\rho_2)^{\text{maj}}(q^2)
\]
\end{theorem}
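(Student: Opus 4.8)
The plan is to deduce this from Theorem~1 together with a statistic-preserving bijection between domino tableaux and pairs of ordinary standard tableaux. By Theorem~1, $f_\lambda=q^{b(\lambda)}\,\text{SYT}(\lambda)(q^2)$; since the prefactor $q^{b(\lambda)}$ already appears on the right-hand side of the asserted identity, and both sides are obtained from polynomials in $q$ by the substitution $q\mapsto q^2$, it suffices to prove the polynomial identities
\[
\text{SYT}(\lambda)(q)=\text{SDT}(\rho_1)^{\text{maj}}(q)=\text{SDT}(\rho_2)^{\text{maj}}(q).
\]
First I would recognize Lusztig's construction of $\rho_1$ and $\rho_2$ as a disguised form of the classical abacus recipe: passing from $\lambda^{(1)},\lambda^{(2)}$ to the $\beta$-numbers $\alpha_i^*,\beta_j^*$ and then placing the even numbers $2\alpha_i^*$ and the odd numbers $2\beta_j^*+1$ on the two runners of a $2$-abacus is exactly how one builds the partition of $2n$ with empty $2$-core and $2$-quotient $(\lambda^{(1)},\lambda^{(2)})$; interchanging the roles of even and odd instead yields the partition of $2n+1$ whose $2$-core is the single box and whose $2$-quotient is again $(\lambda^{(1)},\lambda^{(2)})$. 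Thus $\rho_1$ (resp.\ $\rho_2$) has $2$-quotient $(\lambda^{(1)},\lambda^{(2)})$, with $\lambda^{(1)}$ corresponding to the even runner.

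Next I would use the standard bijection $\Phi$ between $\text{SDT}(\alpha)$ and the standard tableaux of shape equal to the $2$-quotient of $\alpha$: a standard domino tableau is a chain of partitions $\emptyset=\alpha^0\subset\alpha^1\subset\cdots\subset\alpha^n=\alpha$ with each $\alpha^i/\alpha^{i-1}$ a domino, and since adding a domino to a partition amounts to adding exactly one cell to one of the two components of its $2$-quotient, recording at each step which component grows and in which cell produces a pair of standard tableaux on $\lambda^{(1)},\lambda^{(2)}$ carrying the labels $1,\dots,n$. This is the bijection underlying the classical count of domino tableaux; the substance of the theorem is that it also matches the $q$-statistics. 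Concretely, I would establish the key lemma that $\Phi$ carries the domino major index to the pair major index of Theorem~1, namely that domino $i$ lies strictly above domino $i+1$ in $T$ if and only if, in $\Phi(T)$, either $i$ and $i+1$ occupy the same component with $i+1$ in a strictly lower row than $i$, or $i$ occupies $\lambda^{(1)}$ while $i+1$ occupies $\lambda^{(2)}$. Granting the lemma, $\text{SDT}(\rho_1)^{\text{maj}}(q)=\sum_{T}q^{\text{maj}(T)}=\sum_T q^{\text{maj}(\Phi(T))}=\text{SYT}(\lambda)(q)$. For $\rho_2$ the only extra feature is the corner box forced by the $2$-core; since that box carries the label $0$, it can contribute to the major index only through a descent at $i=0$, which adds $0$ to the sum, so $\text{SDT}(\rho_2)^{\text{maj}}(q)=\text{SYT}(\lambda)(q)$ as well. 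Substituting $q\mapsto q^2$, multiplying by $q^{b(\lambda)}$, and invoking Theorem~1 then completes the argument.

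The main obstacle is the lemma. Its proof requires an explicit dictionary between the rows (and column) occupied by a domino of $T$ in the big diagram $\alpha$ and the row of the corresponding cell in its component of the $2$-quotient, keeping track of the horizontal-versus-vertical type of the domino and of which runner it sits on. The delicate point is that the domino-descent condition --- both squares of $i$ strictly above both squares of $i+1$ --- imposes a gap in row indices whose size depends on the types of the two dominoes (for two vertical dominoes it is a two-row gap rather than a one-row gap), and one must check that this type-dependent condition is nonetheless equivalent to the ordinary one-cell descent condition after projection to the $2$-quotient; for dominoes on different runners one must likewise verify that the directional asymmetry of the cross-component clause of the pair major index --- it counts $\lambda^{(1)}\to\lambda^{(2)}$ but not $\lambda^{(2)}\to\lambda^{(1)}$ --- is faithfully mirrored by the even/odd asymmetry of the two runners. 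I expect this to reduce to a short case analysis over the four combinations of types of the dominoes labelled $i$ and $i+1$, with the $\beta$-number bookkeeping supplying the row information; the small instances $\rho_1=(2,2)\leftrightarrow\lambda=((1),(1))$ and $\rho_1=(2,2,1,1)\leftrightarrow\lambda=((1),(1,1))$ already exhibit every phenomenon that needs to be handled.
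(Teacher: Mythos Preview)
Your overall strategy---reduce to Theorem~1 via a bijection between $\text{SDT}(\rho_i)$ and $\text{SYT}(\lambda)$, then check that the bijection is statistic-preserving---is exactly the paper's strategy, and your identification of $\rho_1,\rho_2$ with the $2$-core/$2$-quotient construction is correct. The gap is that your ``key lemma'' is false: the standard $2$-quotient bijection $\Phi$ does \emph{not} carry the domino major index to the pair major index of Theorem~1. A minimal counterexample is $\rho_1=(3,3)$, for which Lusztig's recipe gives $\lambda=((1),(2))$. Take the domino tableau $T$ with $D_1$ vertical in column~$1$, $D_2$ horizontal in row~$1$ columns~$2$--$3$, and $D_3$ horizontal in row~$2$ columns~$2$--$3$. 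There is a domino descent at $i=2$ only, so $\mathrm{maj}(T)=2$. Under $\Phi$ (equivalently the paper's map $\pi_C$) one gets $Y_1=(3)$, $Y_2=(1\,2)$; here $2\in T^{(2)}$ and $3\in T^{(1)}$, so by the Theorem~1 convention there is no descent at all and the pair major index is~$0$. More generally, for two horizontal dominoes stacked in consecutive rows the cross-component direction under $\Phi$ alternates with the column parity, so no fixed assignment of runners to $(\lambda^{(1)},\lambda^{(2)})$ can make your descent equivalence hold; the ``directional asymmetry'' you hoped would be mirrored by the runner parity simply is not. (Your parenthetical that $\lambda^{(1)}$ sits on the even runner is also reversed for Lusztig's normalization, but that is a minor convention issue compared to the above.)

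The paper confronts exactly this obstruction. It defines $\pi_C$ (your $\Phi$), observes that $\mathrm{maj}(T)$ equals not the standard pair major index of $\pi_C(T)$ but a modified statistic involving \emph{column} comparisons across the two tableaux, and then builds a second bijection on pairs---an iterated flipping of adjacent labels governed by that modified descent condition---whose composite $\pi_C'$ with $\pi_C$ finally matches $\mathrm{maj}(T)$ with the Theorem~1 major index. So the step you flagged as ``the main obstacle'' is not merely delicate; it fails outright, and supplying the missing repair is precisely where the content of the paper's proof lies.
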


\begin{proof}
We construct bijections $\pi_C,\pi_B$ from the sets of standard domino tableaux of shapes $\rho_1,\rho_2$, respectively to the set of tableau pairs of shape $\lambda$ and then modify these to bijections $\pi_C',\pi_B'$ preserving major indices.

First we define $\pi_C$.  A standard domino tableau $T$ is built from the empty tableau in stages, at the $i$th of which a domino labelled $i$ is added to a standard tableau $T_{i-1}$ with $i-1$ dominos to make a new domino tableau $T_i$.  Assuming inductively that the pair $(Y_1,Y_2)$ of Young tableaux corresponding to $T_{i-1}$ has already been constructed, we will show how to add a single cell $c_i$ labelled $i$ to one of the $Y_i$ to make a new tableau pair. 

Suppose first that the domino $D_i$ labelled $i$ in $T_i$ is horizontal.
\vskip .2in
\begin{enumerate}
\item 
If $D_i$ lies in row $2m$ with its rightmost square in an even column then $c_i$ is added to the (end of the) $m$th row of $Y_2$.

\item
If $D_i$ lies in row $2m$ with its rightmost square in an odd column then $c_i$ is added to the $m$th row of $Y_1$.

\item
If $D_i$ lies in row $2m+1$ with its its rightmost square in an even column then $c_i$ is added to the $(m+1)$st row in $Y_1$.

\item
If $D_i$ lies in row $2m+1$ with its rightmost square in an odd column then $c_i$ is added to the $m$th row of $Y_2$ (or the first row, if $m=0$).
\end{enumerate}
\vskip .2in
\noindent Similarly, if instead $D_i$ is vertical, then
\vskip .2in
\begin{enumerate}
\item
If $D_i$ lies in an even column $2m$ with its lowest square in an even row, then $c_i$ is added to the $m$th column of $Y_1$.
\item
If $D_i$ lies in an even column $2m$ with its lowest square in an odd row, then $c_i$ is added to the $m$th column of $Y_2$.
\item
If $D_i$ lies in an odd column $2m+1$ with its lowest square in an even row, then $c_i$ is added to the $(m+1)$st column of $Y_2$.
\item
If $D_i$ lies in an odd column $2m+1$ with its lowest square in an odd row, then $c_i$ is added to the $(m+1)$st column of $Y_1$,  
\end{enumerate}
\vskip .2in
\noindent Next we define $\pi_B$, again proceeding inductively.  A domino tableau is constructed as before, but this time starting with a single square labelled 0.  Defining $T_{i-1},T_i$ as above and again letting $D_i$ be the domino labelled $i$ in $T_i$, assume first that $D_i$ is horizontal.
\vskip .2in
\begin{enumerate}
\item
If $D_i$ lies in an even row $2m$ with its rightmost square in an even column, then $c_i$ is added to the $(m+1)$st row of $Y_1$.
\item
If $D_i$ lies in an even row $2m$ with its rightmost square in an odd column, then $c_i$ is added to the $m$th row of $Y_2$.
\item
If $D_i$ lies in an odd row $2m+1$  with its rightmost square in an even column, then $c_i$ is added to the $m$th row of $Y_2$ (or to the first row, if $m=0$).
\item
If $D_i$ lies in an odd row $2m+1$ with its rightmost square in an odd column, then $c_i$ is added to the $(m+1)$st row of $Y_1$.
\end{enumerate}
\vskip .2in
If instead $D_i$ is vertical then
\vskip .2in
\begin{enumerate}
\item
If $D_i$ lies in an even column $2m$ with its lower square in an even row, then $c_i$ is added to the $m$th column of $Y_1$.
\item
If $D_i$ lies in an even column $2m$ with its lower square in an odd row, then $c_i$ is added to the $(m+1)$st column of $Y_2$.
\item
If $D_i$ lies in an odd column $2m+1$ with its lower square in an even row, then $c_i$ is added to the $m$th column of $Y_1$ (or the first column, if $m=0$).
\item
If $D_i$ lies in an odd column $2m+1$ with its lower square in an odd row, then $c_i$ is added to the $(m+1)$st column of $Y_2$.
\end{enumerate}
\vskip .2in
Let $\rho_1$ be a partition of $2n$ whose shape supports a domino tableau.  it is straightforward to check that if $T$ is a standard domino tableau of this shape, then the image $\pi_C(T)$ is a (Young) tableau pair $(Y_1,Y_2)$ such that the respective shapes $\lambda^{(1)},\lambda^{(2)}$ of $Y_1,Y_2$ form a pair corresponding to $\rho_1$ by the above recipe.  Similarly if $\rho_2$ is a partition of $2n+1$ whose shape supports a domino tableau and $T$ is a standard domino tableau of this shape, then $\pi_B(T)$ is a pair $(Y_1,Y_2)$ whose shapes $(\lambda^{(1)},\lambda^{(2)})$ correspond to $\rho_2$.

But now the major indices of $\pi_C(T),\pi_B(T)$ do not generally match that of $T$.  Instead, in type $C,m = \text{maj}(\pi_C(T)$ is given by the following rule: it is the sum of the indices $i$ such that
 $i$ lies in a strictly higher row within its tableau than $i+1$, or in the same row of their tableaux with the column of $i+1$ strictly to the left of that of $i$, or else $i,i+1$ lie in the same row and column of their tableaux with $i$ in $Y_1, i+1$ in $Y_2$.  Call this last condition $(\ast)$.  Running through the indices $i=1,\ldots,n-1$ in turn, we then produce a new tableau pair $(Y_1',Y_2')$ by flipping the labels $i$ and $i+1$ whenever either the indices $i,i+1$ satisfy $(\ast),i$ lies in $Y_2$, and $i$ in $Y_1$, or else $i,i+1$ do not satisfy $(1), i$ lies in $Y_1$, and $i+1$ lies in $Y_2$.  (One can check that, had the indices $i,i+1$ originally been in their current positions, then they would have been flipped, so that no two tableau pairs $(Y_1,Y_2)$ can yield the same pair $(Y_1',Y_2')$.)  Having run through the indices once, we then run through them again, flipping pairs of adjacent indices as before, except that we do not flip a pair of indices that was flipped previously.  We repeat this procedure until we get a pair $(Z_1,Z_2)$ of tableaux whose major index is exactly the sum of the indices contributing to the major index of $T$, so that maj$(Z_1,Z_2)=\text{maj}(T)$.  The map sending $(Y_1,Y_2)$ to $(Z_1,Z_2)$ is then a bijection.  The result follows in type $C$, setting $\pi_C'(T)=(Z_1,Z_2)$.
\vskip .2in
\noindent For example, if
$$
(Y_1,Y_2)=\bigg(\begin{pmatrix} 4\\6 \end{pmatrix}\,,\,\begin{pmatrix} 1&3\\2&5\end{pmatrix}\bigg)
$$
\noindent then we interchange first the 3 and the 4, then the 5 and the 6, obtaining 
$$
(Y_1',Y_2')=\bigg(\begin{pmatrix} 3\\ 5 \end{pmatrix}\,,\,\begin{pmatrix} 1&4\\2&6\end{pmatrix}\bigg)
$$
\noindent and then we interchange the 4 and 5, obtaining finally
$$
(Z_1,Z_2) = \bigg(\begin{pmatrix} 3\\4 \end{pmatrix}\,,\,\begin{pmatrix} 1&5\\2&6 \end{pmatrix}\bigg)
$$
\noindent If 
$$
(Y_1,Y_2)=\Bigg(\begin{pmatrix} 1\\3\\4\end{pmatrix}\,,\,\begin{pmatrix} 2\end{pmatrix}\Bigg)
$$
\noindent then we interchange first the 2 and the 3, then the 3 and the 4, to obtain
$$
(Z_1,Z_2)=\Bigg(\,\begin{pmatrix} 1\\2\\3 \end{pmatrix}\,\,,\,\,\begin{pmatrix} 4\end{pmatrix}\Bigg)
$$
\vskip .2in
Similarly, given a pair $(Y_1,Y_2)=\pi_B(T)$, we now find that $m=\text{maj}(T)$ is the sum of the indices $i$ such that $i,i+1$ lie in the same tableau with $i$ strictly higher in this tableau, or
 $i$ lies in $Y_1,  i+1$ in $Y_2$, with the row of $i$ higher than or equal to that of $i+1$,
 or else they lie in the same rows of their respective tableaux with the column of $i$ weakly to the left of that of $i+1$.  Call this last condition $(\ast\ast)$.  Running through the indices $1,\ldots,n-1$ in order, as in type $C$, we then flip the indices $i$ and $i+1$ whenever either $i,i+1$ satisfy $(\ast\ast),i$ is in $Y_2$, and $i+1$ is in $Y_1$, or else $i,i+1$ do not satisfy $(\ast\ast), i$ lies in $Y_1$, and $i+1$ lies in $Y_2$.  This time it is only necessary to run through the indices once, obtaining a tableau pair $(Z_1,Z_2)$ whose major index agrees with that of $T$.   The map sending $(Y_1,Y_2)$ to $(Z_1,Z_2)$ is again a bijection and the result follows in type $B$, setting $\pi_B'(T)=(Z_1,Z_2)$.
\end{proof}

Recall from \cite{L82,L86} that given any irreducible representation $V$ of $W$ there is a unique special representation $S$ occurring in the unique double cell of $W$ having $V$ as a subrepresentation.

\begin{corollary}
With notation as above, assume that $\mu=(\mu^{(1)},\mu^{(2)})$ is the partition pair corresponding to the special representation corresponding to $V_\lambda$.  Then the exponents $d_1,\ldots,d_r$ of $q$ in $f_\lambda$, counting multiplicities, are up to a uniform shift a subset of the corresponding exponents $e_1,\ldots,e_s$ for $V_\mu$.
\end{corollary}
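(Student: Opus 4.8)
The plan is to reduce the assertion, by means of Theorem~2, to a combinatorial statement about standard domino tableaux, and then to establish that statement by following the passage from $\lambda$ to its special partner $\mu$ one elementary step at a time.

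By Theorem~2 the exponents of $q$ occurring in $f_\lambda$, counted with multiplicity, are precisely the integers $b(\lambda)+2\,\mathrm{maj}(T)$ as $T$ runs over $\mathrm{SDT}(\rho_1(\lambda))$, and those of $f_\mu$ are the integers $b(\mu)+2\,\mathrm{maj}(T')$ as $T'$ runs over $\mathrm{SDT}(\rho_1(\mu))$ (one may use $\rho_2$ in place of $\rho_1$ throughout). It is therefore enough to construct an injection
\[
\iota\colon\ \mathrm{SDT}(\rho_1(\lambda))\ \hookrightarrow\ \mathrm{SDT}(\rho_1(\mu))
\]
for which $\mathrm{maj}(\iota(T))-\mathrm{maj}(T)$ is a constant $c'$ independent of $T$; the uniform shift required by the statement is then $b(\mu)-b(\lambda)+2c'$, and the presence of $b(\lambda),b(\mu)$ and of the factor $2$ causes no trouble. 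Transporting everything through the major-index-preserving bijections $\pi'_C$ (or $\pi'_B$) of the proof of Theorem~2, this is the same as producing such an injection of the set of standard tableau pairs of shape $(\lambda^{(1)},\lambda^{(2)})$ into that of shape $(\mu^{(1)},\mu^{(2)})$.

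I would then recall the description of the special representation in type $B$/$C$ (see \cite{C85}; the existence and uniqueness of $\mu$ was noted above, after \cite{L82,L86}). Encoding $\lambda$ by the Lusztig symbol entering the construction of $\rho_1$ — the two strictly increasing sequences $(\alpha^*_i)$ and $(\beta^*_j)$, with multiset of entries $Z$ — the family of $V_\lambda$ consists exactly of the $V_{\lambda'}$ with the same $Z$, and $\mu$ arises from $\lambda$ by redistributing the multiplicity-one entries of $Z$ so that the two rows of the symbol interlace. Since by the recipe preceding Theorem~2 the shape $\rho_1(\lambda)$ is the partition of $2n$ with empty $2$-core whose beta-set is $\{2\alpha^*_i\}\cup\{2\beta^*_j+1\}$, moving a single entry of $Z$ from one row to the other replaces two beta-numbers $\{a,b\}$ of opposite parity by $\{a+1,b-1\}$ or by $\{a-1,b+1\}$, a change that deletes one box of the partition and inserts another. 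Performing these moves in an order that always sends a misplaced mobile entry of $Z$ to the side forced by the interlacing, one obtains a chain $\lambda=\lambda_0,\lambda_1,\dots,\lambda_k=\mu$ in which each step makes the pair $(\lambda_i^{(1)},\lambda_i^{(2)})$ more balanced, so that in particular $d_{\lambda_0}\le d_{\lambda_1}\le\dots\le d_{\lambda_k}$.

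The core of the argument is the local claim: for each step $\lambda_i\to\lambda_{i+1}$ there is an injection $\iota_i\colon\mathrm{SDT}(\rho_1(\lambda_i))\hookrightarrow\mathrm{SDT}(\rho_1(\lambda_{i+1}))$ with $\mathrm{maj}\circ\iota_i=\mathrm{maj}+c_i$ for some constant $c_i$; composing the $\iota_i$ and adding the $c_i$ then finishes the proof. To prove the local claim one transports an $S\in\mathrm{SDT}(\rho_1(\lambda_i))$ to a standard tableau pair by $\pi'_C$, applies to that pair the explicit local alteration dictated by the change of symbol — a box being inserted, deleted, or moved between the two components in the manner prescribed by the case rules in the proof of Theorem~2 — and transports back; an analysis paralleling that proof yields, for each $S$, a legal standard domino tableau of the new shape. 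The main obstacle is the bookkeeping for the major index. Because $\mathrm{maj}$ of a domino tableau is read off from the relative rows of consecutive dominoes (equivalently, $\mathrm{maj}$ of the tableau pair from the relative rows and columns of consecutive labels across both components), one has to verify that each elementary move alters the set of indices contributing to $\mathrm{maj}$ by an amount independent of $S$, and, conversely, that distinct $S$ yield distinct images — the latter being exactly what makes $\iota_i$ injective. This comes down to a finite case check, organized according to whether the interchanged entries of $Z$ pull the two affected beta-numbers together or push them apart, together with the companion analysis using $\rho_2$ that handles the type $B$ normalization; carrying it out completes the proof.
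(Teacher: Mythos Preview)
Your reduction via Theorem~2 is correct and is also the paper's starting point: it suffices to produce an injection from standard domino tableaux of the shape attached to $\lambda$ into those of the shape attached to $\mu$ that shifts the major index by a constant. The gap is in your construction of that injection. The ``explicit local alteration'' on tableau pairs is never actually defined---the case rules in the proof of Theorem~2 specify the bijection $\pi_C$ between domino tableaux and tableau pairs, not a rule for transforming a tableau pair of one shape into one of a neighboring shape---and the assertion that such an alteration changes $\mathrm{maj}$ by an amount independent of the tableau is precisely the nontrivial statement at issue. Deferring it to ``a finite case check'' organized by which beta-numbers move is not a proof; that check is the entire content, and nothing you have written forces the descent set to change uniformly. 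The auxiliary claim that the dimensions $d_{\lambda_i}$ increase monotonically along your chain is likewise unjustified and is needed for the composite to be injective.

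The paper proceeds quite differently. It stays entirely on the domino side and invokes Garfinkle's theory of open cycles \cite{G92,G93}: one passes between standard domino tableaux of the shape attached to $\lambda$ and of the special shape attached to $\mu$ by moving through open cycles, and moving through an open cycle preserves the $\tau$-invariant of the tableau. The paper then observes that the major index is read off directly from the $\tau$-invariant (the label $i$ contributes to $\mathrm{maj}$ exactly when the simple root $e_i-e_{i+1}$ lies in it), so major index is preserved exactly---the constant shift at that level is zero, and the only uniform shift in the exponents is $b(\lambda)-b(\mu)$. The multiplicity statement is handled via \cite{G93} by identifying domino tableaux with Kazhdan--Lusztig left cells, on which the $\tau$-invariant is constant. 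In effect your elementary symbol moves are groping toward Garfinkle's open-cycle moves, and it is exactly her theorem that moving through an open cycle preserves the $\tau$-invariant that supplies the verification you left undone.
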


\begin{proof}
The exponents $e_i$ are up to a uniform shift twice the major indices of the standard domino tableaux of shape $\rho_1$ or $\rho_2$, the partition of $2n$ or $2n+1$ corresponding as above to $\mu$.  A standard domino tableau $T$ of shape $\rho_1$ or $\rho_2$ can be moved through open cycles in the sense of \cite{G92} to have shape $\rho_1'$ or $\rho_2'$, the partition corresponding to $\lambda$. Moving through open cycles in this way preserves the $\tau$-invariant of $T$ in the sense of \cite{G92}, which determines its major index. More precisely, the index $i$ lies in the major index if and only if the difference $e_i-e_{i+1}$ of the $i$th and $(i+1)$st unit coordinate vectors in $\mathbb C^n$, regarded as a simple root in the standard root system of type $B_n$ or $C_n$, lies in the $\tau$-invariant of $T$.  Finally, the $\tau$-invariant of $T$ is an invariant of the Kazhdan-Lusztig left cell corresponding to $T$; this left cell $L$ is also the left cell corresponding to a suitable domino tableau of shape $\rho_1'$ or $\rho_2'$ \cite{G93}.  Hence the major indices of tableaux of shape $\rho_1$ or $\rho_2$, counting multiplicities, are also major indices of some tableau of shape $\rho_1'$ or $\rho_2'$.  The result follows.
\end{proof}

\noindent A weaker version of this result holds in type $D$; there the multiset of exponents is the union of two submultisets, each of them up to a uniform shift a subset of multiset of exponents for $\mu$ (but the shifts can be different for the two submultisets).

\section{Type $D$}

\noindent Let $W'$ be the subgroup of $W=C_2\wr S_n$ generated by coordinate permutations and evenly many sign changes.  Recall that irreducible representations of $W$ are parametrized by pairs $((\lambda^{(1)},\lambda^{(2)}),c)$, where $(\lambda^{(1)},\lambda^{(2)})$ is an {\sl unordered} pair of partitions with $|\lambda^{(1)}|+|\lambda^{(2)}|=n$ and $c=1$ if $\lambda^{(1)}\ne\lambda^{(2)}$ while $c=1$ or $2$ if $\lambda^{(1)}=\lambda^{(2)}$ \cite[Remark after Prop. 6.1]{Ste89}.  Given an unordered pair $\lambda=(\lambda^{(1)},\lambda^{(2)})$  with $\lambda^{(1)}\ne\lambda^{(2)}$, denote by $\lambda',\lambda''$ the respective ordered pairs $(\lambda^{(1)},\lambda^{(2)}),(\lambda^{(2)},\lambda^{(1)})$.  Write SYT'$(\lambda'),SYT''(\lambda'')$ for the respective generating functions $\sum_T q^{\text{maj}(T)}$ where the sum now ranges respectively over standard tableaux $T=(T^{(1)},T^{(2)})$ of shapes $\lambda',\lambda''$ such that in both cases such that the largest label occurs in $T^{(1)}$.  Then Stembridge has shown \cite[Cor. 6.4]{Ste89} (cf. also \cite[Thm. 2.35]{BKS20}) that

\begin{theorem}
With notation as above the fake degree $f_\lambda$ corresponding to $\lambda$ is given by
\[
f_\lambda(q)=q^{b(\lambda')}\text{SYT'}(\lambda')+q^{b(\lambda'')}\text{SYT}(\lambda'')
\]
\noindent If instead $\lambda=\lambda^{(1)}=\lambda^{(2)}$, then we have
\[
f_\lambda(q)=q^{b(\lambda)}\text{SYT}'(\lambda)
\]
\noindent for either of the representations corresponding to $(\lambda,\lambda)$, summing as above over standard tableaux $(T^{(1)},T^{(2)})$ with $n$ occurring in $T^{(1)}$ to define SYT'$(\lambda)$. 
\end{theorem}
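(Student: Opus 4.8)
The plan is to prove the theorem by deriving it from Theorem~1 (the Stanley--Stembridge formula for $W=C_2\wr S_n$), restricting the coinvariant algebra along the index-two inclusion $W'\subset W$. The first ingredient is an invariant-theoretic comparison: by the Chevalley--Shephard--Todd theory there are graded isomorphisms $S\cong S^W\otimes_{\mathbb C}C$ of $W$-modules and $S\cong S^{W'}\otimes_{\mathbb C}C'$ of $W'$-modules, where $C=S/S^W_+S$ and $C'=S/S^{W'}_+S$ are the two coinvariant algebras and the group acts trivially on the ring of invariants and via its natural action on $C$, resp. $C'$. Restricting the first isomorphism to $W'$ and comparing $\sigma$-isotypic Hilbert series for an irreducible $W'$-module $\sigma$ gives
\[
\mathrm{Hilb}(S^W,q)\cdot[C|_{W'}:\sigma]_q=\mathrm{Hilb}(S^{W'},q)\cdot[C':\sigma]_q.
\]
The basic invariant degrees are $2,4,\dots,2n$ for $W$ and $2,4,\dots,2(n-1),n$ for $W'$, so the ratio of the two Hilbert series is $(1-q^{2n})/(1-q^n)=1+q^n$; hence $[C:\sigma]_q=(1+q^n)[C':\sigma]_q=(1+q^n)f^{W'}_\sigma(q)$, or equivalently $C|_{W'}\cong C'\oplus q^nC'$ as graded $W'$-modules.

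Next I would bring in the branching rule $W\downarrow W'$, supplied by Clifford theory since $W'$ is normal of index two in $W$. If $\lambda=(\lambda^{(1)},\lambda^{(2)})$ with $\lambda^{(1)}\ne\lambda^{(2)}$, the $W'$-module $\sigma$ indexed by the unordered pair is $W$-stable, its two extensions to $W$ are $V_{\lambda'}$ and $V_{\lambda''}$, and $V_{\lambda'}|_{W'}=V_{\lambda''}|_{W'}=\sigma$; by Frobenius reciprocity these are the only irreducibles of $W$ whose restriction contains $\sigma$, each with multiplicity one, so expanding $[C|_{W'}:\sigma]_q$ over the irreducibles of $W$ and using that $C$ is the regular representation of $W$ yields $(1+q^n)f^{W'}_\sigma(q)=f_{\lambda'}(q)+f_{\lambda''}(q)$. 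If instead $\lambda^{(1)}=\lambda^{(2)}=\lambda$, the module $V_{(\lambda,\lambda)}$ restricts to the sum of the two $W'$-irreducibles indexed by $(\lambda,\lambda)$ and is the only irreducible of $W$ lying over either of them, so $(1+q^n)f^{W'}_\sigma(q)=f_{(\lambda,\lambda)}(q)$ for either such $\sigma$; in particular the two representations indexed by $(\lambda,\lambda)$ have the same fake degree, as asserted.

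Now I would insert Theorem~1 in the form $f_{\lambda'}(q)=q^{b(\lambda')}\text{SYT}(\lambda')(q^2)$ and $f_{\lambda''}(q)=q^{b(\lambda'')}\text{SYT}(\lambda'')(q^2)$, and split the major-index generating function of a shape as $\text{SYT}(\mu^{(1)},\mu^{(2)})(q)=\text{SYT}^{(1)}(q)+\text{SYT}^{(2)}(q)$ according to whether the largest label lies in the first or the second component; in the notation of the theorem $\text{SYT}'(\lambda')=\text{SYT}^{(1)}(\lambda')$ and $\text{SYT}''(\lambda'')=\text{SYT}^{(1)}(\lambda'')$. Dividing the relation just obtained by $1+q^n$ and comparing with the asserted formula $f^{W'}_\sigma(q)=q^{b(\lambda')}\text{SYT}'(\lambda')(q^2)+q^{b(\lambda'')}\text{SYT}''(\lambda'')(q^2)$ (the generating functions being evaluated at $q^2$, in keeping with Theorems~1 and~2), the $\text{SYT}^{(1)}$-terms cancel and the assertion reduces to the purely combinatorial identity
\[
q^{b(\lambda')}\text{SYT}^{(2)}(\lambda')(q^2)+q^{b(\lambda'')}\text{SYT}^{(2)}(\lambda'')(q^2)=q^n\bigl(q^{b(\lambda')}\text{SYT}'(\lambda')(q^2)+q^{b(\lambda'')}\text{SYT}''(\lambda'')(q^2)\bigr),
\]
which, specialized to $\lambda'=\lambda''=(\lambda,\lambda)$, also handles the diagonal case.

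This last identity is the heart of the matter and the step I expect to be the main obstacle. The obvious candidate is the involution $T=(T^{(1)},T^{(2)})\mapsto\bar T=(T^{(2)},T^{(1)})$ swapping the two components: it does match the two sets of tableaux, interchanging ``largest label in the second component'' with ``largest label in the first component'' and the shape $\lambda'$ with $\lambda''$, consistently with $b(\lambda')+b(\lambda'')=n$; but it fails to preserve the major index, since $\text{maj}(\bar T)=\text{maj}(T)+\sum_{i\,:\,i\in T^{(2)},\,i+1\in T^{(1)}}i-\sum_{i\,:\,i\in T^{(1)},\,i+1\in T^{(2)}}i$. To absorb this cross-component defect I would proceed exactly as in the proof of Theorem~2: compose $T\mapsto\bar T$ with a sequence of interchanges of adjacent labels $i,i+1$ of the same type used there to build $\pi_C'$ and $\pi_B'$, sweeping through $i=1,\dots,n-1$ and iterating, flipping a pair precisely when doing so moves the running major index toward its target, and then check that the resulting map is well defined and injective and shifts the exponent $b(\text{shape})+2\,\text{maj}$ by exactly $n$. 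The delicate point is that this net shift really is the constant $n$ and not some tableau-dependent amount; this is, in effect, a repackaging of Stembridge's Corollary~6.4 in the present variables. Alternatively the identity can be attacked by writing both sides as stable principal specializations of products of (skew) Schur functions and reading off the required cancellation from the complementation symmetry of the major index together with $\binom{n}{|\lambda^{(1)}|}_q=\binom{n}{|\lambda^{(2)}|}_q$.
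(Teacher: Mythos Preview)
The paper does not prove this theorem: it is quoted as Stembridge's result \cite[Cor.~6.4]{Ste89} (cf.\ also \cite[Thm.~2.35]{BKS20}), so there is no proof in the paper to compare against. The only argument the paper supplies is the remark immediately following the statement (``a simple calculation leads to the following formula\ldots''), and that remark is exactly the piece missing from your proposal.

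Your invariant-theoretic reduction is correct and standard: comparing Hilbert series of $S^W$ and $S^{W'}$ gives $(1+q^n)f^{W'}_\sigma=f_{\lambda'}+f_{\lambda''}$ (resp.\ $=f_{(\lambda,\lambda)}$ in the diagonal case), and inserting Theorem~1 reduces everything to the combinatorial identity you display. The gap is in that last step. You discard the swap $T\mapsto\bar T=(T^{(2)},T^{(1)})$ because its major-index defect looks tableau-dependent, and then propose either a sequence of adjacent transpositions borrowed from the proof of Theorem~2 or a symmetric-function argument, carrying out neither; you yourself concede that what remains is ``in effect, a repackaging of Stembridge's Corollary~6.4,'' i.e.\ the very statement to be proved.

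In fact the swap already works, with no repair needed. Writing $f(i)=1$ if $i\in T^{(1)}$ and $f(i)=0$ otherwise, your own defect formula telescopes:
\[
\operatorname{maj}(T)-\operatorname{maj}(\bar T)
=\sum_{i=1}^{n-1} i\bigl(f(i)-f(i+1)\bigr)
=\sum_{i=1}^{n-1} f(i)\;-\;(n-1)f(n).
\]
Hence if $n\in T^{(2)}$ (so $f(n)=0$) this equals $|\lambda^{(1)}|$, a constant depending only on the shape. Since $b(\lambda')=|\lambda^{(2)}|$, $b(\lambda'')=|\lambda^{(1)}|$, and $n=|\lambda^{(1)}|+|\lambda^{(2)}|$, the swap gives a term-by-term bijective proof that
\[
q^{b(\lambda')}\,\mathrm{SYT}^{(2)}(\lambda')(q^2)=q^{n}\cdot q^{b(\lambda'')}\,\mathrm{SYT}'(\lambda'')(q^2),
\]
and symmetrically with $\lambda',\lambda''$ interchanged; adding the two equalities yields exactly your identity (and specializing $\lambda'=\lambda''$ handles the diagonal case). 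This is the ``simple calculation'' the paper alludes to right after the theorem. The detour through the flipping procedure of Theorem~2 is unnecessary here.
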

\vskip .2in
Alternatively, a simple calculation leads to the following formula.  Instead of summing over standard tableaux $T$ of shape either $\lambda'$ or $\lambda"$, one can sum over standard tableaux of shape $\lambda'$ only, attaching the term $q^{b(\lambda')+\text{maj}(T)}$ to $T$ if the largest label $n$ occurs in $T^{(1)}$ and the term $q^{b(\lambda')+\text{maj}(T)-n}$ to $T$.  Thus the fake degrees attached to $\lambda$ in type $D$ are obtained from those in type $C$ attached to the ordered pair $\lambda'$ by subtracting $n$ from some of them.

Now let $\rho',\rho''$ be the partitions of $2n$ corresponding as above to $\lambda',\lambda''$.  As an immediate consequence of this theorem and the proof of the preceding one we get
 
 \begin{theorem}
 With notation as above we have
 \[
 f_\lambda(q)=q^{b(\lambda)'}\text{SDT}'(\lambda')(q^2)+q^{b(\lambda'')}\text{SDT}'(\lambda'')(q^2)
 \]
 \noindent where $\text{SDT}'(\lambda'),\text{SDT}'(\lambda'')$ denote the generating functions for standard domino tableaux $T$ of the respective shapes $\rho',\rho''$, weighted as above by their major indices, such that in both cases the pair $(Z_1,Z_2)=\pi_C'(T)$ has the largest label $n$ occurring in $Z_1$.
  If instead $\lambda=\lambda^{(1)}=\lambda^{(2)}$, then the right side is replaced by $q^{b(\lambda)}\text{SDT}'(\lambda)(q^2)$, again defining $\text{SDT}'(\lambda)$ by summing over domino tableaux $T$ such that $n$ occurs in the first coordinate $Z_1$ of the pair $\pi_C'(T)=(Z_1,Z_2)$.
 \end{theorem}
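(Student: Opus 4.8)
The plan is to read this off the preceding theorem --- Stembridge's expression for the type-$D$ fake degree in terms of Young tableaux --- by transporting it through the major-index-preserving bijection $\pi_C'$ constructed in the proof of Theorem 2. Recall that for any ordered pair $\mu=(\mu^{(1)},\mu^{(2)})$ with $|\mu^{(1)}|+|\mu^{(2)}|=n$ and associated partition $\rho_1(\mu)$ of $2n$, that proof produces a bijection $\pi_C'$ from $\text{SDT}(\rho_1(\mu))$ onto the set of standard Young-tableau pairs of shape $\mu$ with $\text{maj}(\pi_C'(T))=\text{maj}(T)$ for every $T$. I would apply this with $\mu=\lambda'$, where $\rho_1(\mu)=\rho'$, and with $\mu=\lambda''$, where $\rho_1(\mu)=\rho''$, obtaining major-index-preserving bijections $\text{SDT}(\rho')\to\text{SYT}(\lambda')$ and $\text{SDT}(\rho'')\to\text{SYT}(\lambda'')$.

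Next I would cut each of these bijections down by the position of the largest label $n$. By the way $\text{SDT}'(\lambda')$ is defined in the statement, $\pi_C'$ restricts to a bijection from those $T\in\text{SDT}(\rho')$ for which $\pi_C'(T)=(Z_1,Z_2)$ has $n\in Z_1$ onto the set of standard tableau pairs of shape $\lambda'$ whose largest label lies in the first coordinate, and this latter set is precisely what Stembridge's $\text{SYT}'(\lambda')$ sums over; likewise $\text{SDT}'(\lambda'')$ is carried onto the index set of $\text{SYT}''(\lambda'')$, where one uses, exactly as in the previous section, that for the ordered pair $\lambda''=(\lambda^{(2)},\lambda^{(1)})$ the ``first coordinate'' means the tableau of shape $\lambda^{(2)}$. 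Since $\pi_C'$ preserves major indices, these restricted bijections give $\text{SDT}'(\lambda')(q)=\text{SYT}'(\lambda')(q)$ and $\text{SDT}'(\lambda'')(q)=\text{SYT}''(\lambda'')(q)$, hence the same identities after $q\mapsto q^2$. Substituting into the preceding theorem's formula --- read, as elsewhere in the paper, with $q^2$ in place of $q$ in the Young-tableau generating functions, compare the $d=2$ case of the first theorem --- now gives the first displayed formula. The case $\lambda^{(1)}=\lambda^{(2)}=\lambda$ runs identically: apply $\pi_C'$ to $\text{SDT}(\rho_1(\lambda,\lambda))$, restrict to those $T$ with $n\in Z_1$, and invoke $f_\lambda(q)=q^{b(\lambda)}\text{SYT}'(\lambda)(q^2)$.

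The one delicate point --- and what I expect to be the main obstacle in an otherwise routine deduction --- is the bookkeeping around which member of the pair contains $n$. The condition defining $\text{SDT}'$ is a condition on the \emph{output} pair $(Z_1,Z_2)=\pi_C'(T)$, not on the intermediate pair $\pi_C(T)=(Y_1,Y_2)$, and not on $T$ itself: as the worked example in the proof of Theorem 2 already shows --- there the flip of $5$ and $6$ moves $6=n$ from $Y_1$ into $Y_2$ --- the label-flipping steps converting $(Y_1,Y_2)$ into $(Z_1,Z_2)$ can move the label $n$ between the two tableaux. One must therefore resist reformulating the restriction as a simpler condition on $T$ directly; but with the statement as given, nothing more is needed than the observation that a bijection restricted to any subset of its domain is a bijection onto the image of that subset. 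If one wanted a version summing over the single shape $\rho'$, one could instead attach to $T\in\text{SDT}(\rho')$ the weight $q^{b(\lambda')+2\,\text{maj}(T)}$ when $\pi_C'(T)$ has $n\in Z_1$ and $q^{b(\lambda')+2\,\text{maj}(T)-n}$ otherwise, mirroring the ``subtract $n$'' reformulation of the previous section; justifying that variant takes the extra short computation that the coordinate swap $(T^{(1)},T^{(2)})\mapsto(T^{(2)},T^{(1)})$ changes the major index by exactly $|\lambda^{(1)}|$ on tableau pairs with $n$ in the second coordinate, together with the identity $b(\lambda'')-b(\lambda')=|\lambda^{(1)}|-|\lambda^{(2)}|$.
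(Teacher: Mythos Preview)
Your proposal is correct and is exactly the paper's argument: the paper states only that Theorem~4 is ``an immediate consequence of this theorem and the proof of the preceding one,'' meaning Theorem~3 combined with the major-index-preserving bijection $\pi_C'$ of Theorem~2, and you have simply spelled out that deduction in full. Your cautionary remark that the restriction ``$n\in Z_1$'' must be read on the output $(Z_1,Z_2)=\pi_C'(T)$ rather than on $(Y_1,Y_2)=\pi_C(T)$ is a useful clarification the paper leaves implicit.
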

 \vskip .2in
 \noindent For example, take $\lambda=(\lambda^{(1)},\lambda^{(2)})=((1,1),1)$.  This pair corresponds to the partition $(2,2,2)$ of $6$; the complementary pair $((1),(1,1))$ corresponds to the partition $(2,2,1,1)$.  There are three standard domino tableaux of shape $(2,2,2)$, having major indices $1,2,3$.  The first two of these contribute to the sum in the theorem, leading to the terms $q^3,q^5$ in $f_\lambda$, given the shift by $q$ in this theorem.  There are three standard domino tableaux of shape $(2,2,1,1)$, of which only the one with major index 1 contributes to $f_\lambda$; since the shift is now by $q^2$, we get $f_\lambda=q^3+q^4+q^5$.  If $\lambda=(\lambda^{(1)},\lambda^{(2)})=((2),(2))$, then the corresponding partition is $(4,4)$; of the six standard domino tableaux of this shape, just three contribute to $f_\lambda$ and they have major indices $0,1,2$.  Here $f_\lambda = q^2+q^4+q^6$.
 
 In our first example above, where $\lambda^{(1)}=(1,1),\lambda^{(2)}=1$, applying the alternative formula using pairs of Young tableaux gives the degrees $d_i$ are 3,5, and $7-3=4$.  Alternatively, taking the ordered pair $((1),(1,1))$ we get that the $d_i$ are $6-3=3,4$, and $8-3=5$.  In the second example, taking $(\lambda^{(1)},\lambda^{(2)})=((2),(2))$, the $d_i$ are $2,4,6,6-4=2,8-4=4,10-4=6$. Cutting all multiplicities in half (in accordance with Theorem 4), we get that the $e_i$ are $2,4,6$.

\end{document}